\documentclass[11pt]{article}

\usepackage[a4paper,margin=1in]{geometry}
\usepackage{amsmath,amssymb,amsthm,mathtools}
\usepackage{booktabs}
\usepackage{float}
\usepackage{xurl}
\usepackage{hyperref}
\usepackage[nameinlink,capitalize]{cleveref}

\hypersetup{colorlinks=true,linkcolor=blue,citecolor=blue,urlcolor=blue}

\newtheorem{theorem}{Theorem}[section]
\newtheorem{lemma}[theorem]{Lemma}
\newtheorem{proposition}[theorem]{Proposition}
\newtheorem{corollary}[theorem]{Corollary}
\newtheorem{remark}[theorem]{Remark}

\DeclareMathOperator{\tr}{tr}
\DeclareMathOperator{\KL}{KL}
\newcommand{\R}{\mathbb R}
\newcommand{\Spp}{\mathbb S_{++}}
\newcommand{\cN}{\mathcal N}
\newcommand{\cM}{\mathcal M}
\newcommand{\cP}{\mathcal P}
\newcommand{\dd}{\,\mathrm d}
\newcommand{\Id}{I}

\title{Closed Forms for Gaussian Kullback--Leibler Unbalanced Optimal Transport without Coupling Entropy}
\author{Jiaping Yang\thanks{School of Mathematical Sciences, Fudan University. Email: \texttt{jpyang22@m.fudan.edu.cn}}
 \and Yunxin Zhang\thanks{School of Mathematical Sciences, Fudan University. Email: \texttt{xyz@fudan.edu.cn}}}
\date{}

\begin{document}
\maketitle

\begin{abstract}
We obtain an explicit solution for the static Kullback--Leibler (KL) unbalanced optimal transport problem between finite non-degenerate Gaussian measures with quadratic cost, two independent positive marginal relaxation parameters, and no entropy penalty on the coupling.  The minimizer is a scaled Wasserstein coupling between two adjusted Gaussian marginals and is supported on an affine graph; in entropic Gaussian unbalanced transport, by contrast, the optimal plan is non-degenerate on the product space.  The covariance map is the unique positive definite solution of a Riccati equation and admits a principal-square-root representation.  Compared with the known equal-penalty Gaussian Hellinger--Kantorovich endpoint, the result treats the asymmetric two-sided Kullback--Leibler relaxation and gives the modified marginals, joint minimizer, value, and a direct quadratic KL-dual certificate.  The proof combines a Gaussian reduction with an endpoint duality argument, so the graph singularity is part of the construction.  The large-relaxation limit recovers the Gaussian Wasserstein cost for equal masses.
\end{abstract}

\noindent\textbf{Keywords.} Unbalanced optimal transport; Gaussian measures; Kullback--Leibler divergence; Riccati equation; Bures--Wasserstein distance.

\noindent\textbf{MSC 2020.} 49Q22; 60E05; 15A24; 90C25.

\section{Introduction}

Explicit Gaussian solutions are standard benchmarks in optimal transport and often reveal the underlying matrix geometry.  The best-known example is the quadratic Wasserstein formula, whose covariance term is the Bures--Wasserstein distance on the cone of positive definite matrices; see Dowson and Landau \cite{dowson_landau_1982}, Gelbrich \cite{gelbrich_1990}, Takatsu \cite{takatsu_2011}, and Bhatia, Jain and Lim \cite{bhatia_jain_lim_2019}.

In many optimization and data-analysis problems, the hard marginal constraints of balanced transport are too restrictive.  Unbalanced optimal transport (UOT) replaces them by divergence penalties, allowing mass to be removed or created.  The static and dynamic foundations of this viewpoint were developed in, among others, Chizat, Peyr\'e, Schmitzer and Vialard \cite{chizat_interpolating_2018,chizat_scaling_2018} and Liero, Mielke and Savar\'e \cite{liero_mielke_savare_2018}; see also Peyr\'e and Cuturi \cite{peyre_cuturi_2019} and S\'ejourn\'e, Peyr\'e and Vialard \cite{sejourne_2022} for computational perspectives.  More recent algorithms have emphasized unregularized or weakly regularized UOT formulations, including non-negative penalized regression reformulations \cite{chapel_2021} and scalable low-rank solvers \cite{scetbon_2023}.  This makes continuous test problems with tractable formulas useful for checking weakly regularized regimes.

The problem studied here belongs to the entropy-transport family in the sense that marginal discrepancies are penalized by KL divergences; the notation \(\KL(\rho\mid\eta)\) denotes the generalized relative entropy defined in \eqref{eq:KL} below.  With equal penalties and quadratic ground cost, it is closely related to the Gaussian Hellinger--Kantorovich distance \(GHK_\gamma\) discussed by Liero, Mielke and Savar\'e \cite{liero_mielke_savare_2018}.  In particular, Janati's thesis manuscript identifies the zero-coupling-entropy limit of entropic Gaussian UOT with \(GHK_\gamma\) and gives an explicit value for Gaussian measures in the symmetric case where the two marginal-penalty parameters satisfy \(\tau_0=\tau_1=\gamma\) \cite[Prop.~14]{janati_thesis_2021}.  We therefore treat the equal-penalty Gaussian value formula as prior work.  Our focus is the asymmetric two-sided KL relaxation with independent parameters \(\tau_0,\tau_1\), together with the adjusted Gaussian marginals, the graph-supported minimizer, and a direct primal-dual optimality certificate at the endpoint.

A second comparison is with strictly entropic Gaussian transport.  Janati, Muzellec, Peyr\'e and Cuturi \cite{janati_2020} derived explicit expressions for Gaussian entropic optimal transport and for scaled Gaussian variants; for input measures \(\alpha,\beta\), a transport plan \(\pi\), and with \(\otimes\) denoting product measure, their model contains a plan-level term
\[
    \varepsilon\KL(\pi\mid\alpha\otimes\beta),\qquad \varepsilon>0,
\]
possibly together with marginal KL penalties.  This positive coupling entropy enforces absolute continuity of the optimal plan with respect to \(\alpha\otimes\beta\), so the minimizer is a non-degenerate scaled Gaussian measure on the product space.  Related explicit Gaussian formulas have also been developed for the quadratic Wasserstein distance \(W_2\) with entropy regularization, Gaussian Schr\"odinger bridges, Gaussian Sinkhorn recursions, and entropic Gromov--Wasserstein variants \cite{mallasto_gerolin_minh_2022,bunne_2023,akyildiz_delmoral_miguez_2024,le_2022}.  All these models retain positive entropy or impose different constraints.  Once the plan-level entropy is removed, absolute continuity on the product space is no longer the correct structural ansatz: the optimal measure may be singular and supported on a transport graph.

We work directly at zero coupling entropy.  For finite non-degenerate Gaussian measures \(\alpha=a\mathcal N(m_0,\Sigma_0)\) and \(\beta=b\mathcal N(m_1,\Sigma_1)\), with \(a,b>0\), \(\Sigma_0,\Sigma_1\in\mathbb S_{++}^d\), and \(\tau_0,\tau_1>0\), we solve
\begin{equation*}
    \inf_{\gamma\in\mathcal M_+(\mathbb R^d\times\mathbb R^d)}
    \left\{
    \int \|x-y\|^2\,d\gamma(x,y)
    +\tau_0\KL(\gamma_0\mid\alpha)
    +\tau_1\KL(\gamma_1\mid\beta)
    \right\},
\end{equation*}
where \(\gamma_0,\gamma_1\) are the two marginals of \(\gamma\).  The resulting finite-dimensional expressions identify the transported mass, the modified Gaussian marginals, the affine map between them, and the optimal value.  The covariance component reduces to a positive definite Riccati equation with a principal-square-root solution.

The contribution is threefold.  First, we compute the solution for arbitrary positive, possibly unequal relaxation parameters \(\tau_0,\tau_1\), including the transported mass, adjusted marginals, graph-supported plan, and value.  Second, the Gaussian reduction separates the scalar mass, mean, and covariance variables; the covariance part is governed by a Riccati equation with a unique positive solution written through a principal square root.  Third, explicit quadratic KL-dual potentials certify global optimality and uniqueness by complementary slackness on the graph, without appealing to a singular limit of entropic densities.  The result gives a compact continuous benchmark for weakly regularized UOT regimes and clarifies how the balanced Gaussian Wasserstein formula is recovered when marginal relaxation becomes large.

\section{Closed-form result}

For finite non-negative measures \(\rho,\eta\), the convention is
\begin{equation}\label{eq:KL}
    \KL(\rho\mid\eta)
    =
    \begin{cases}
    \displaystyle\int (r\log r-r+1)\dd\eta, & r=\frac{\mathrm{d}\rho}{\mathrm{d}\eta},\\[2pt]
    +\infty, & \rho\not\ll\eta.
    \end{cases}
\end{equation}
Here \(\rho\ll\eta\) denotes absolute continuity.  We write \(\cM_+(E)\) for the cone of finite non-negative Borel measures on a measurable space \(E\), \(\cP_2(\R^d)\) for the set of Borel probability measures on \(\R^d\) with finite second moment, and \(W_2\) for the quadratic Wasserstein distance on \(\cP_2(\R^d)\).  We also write \(\Spp^d\) for the cone of real \(d\times d\) symmetric positive definite matrices and \(\Id\) for the \(d\times d\) identity matrix.  The notation \(B\succ0\) means that a symmetric matrix \(B\) is positive definite.  We use \(\tr\) and \(\det\) for trace and determinant, respectively.
Let \(\mu_0=\cN(m_0,\Sigma_0)\) and \(\mu_1=\cN(m_1,\Sigma_1)\), where \(\cN(m,\Sigma)\) denotes the Gaussian probability measure with mean \(m\) and covariance \(\Sigma\), \(m_0,m_1\in\R^d\), and \(\Sigma_0,\Sigma_1\in\Spp^d\).  Let \(\alpha=a\mu_0\), \(\beta=b\mu_1\), with \(a,b>0\).  For \(\gamma\in\cM_+(\R^d\times\R^d)\), its marginals are denoted by \(\gamma_0=(\mathrm{pr}_0)_\#\gamma\) and \(\gamma_1=(\mathrm{pr}_1)_\#\gamma\), where \(\mathrm{pr}_0(x,y)=x\), \(\mathrm{pr}_1(x,y)=y\), and \(F_\#\rho\) denotes the push-forward of \(\rho\) by \(F\).  The object of study is
\begin{equation}\label{eq:uot-primal}
    \mathsf U_{\tau_0,\tau_1}(\alpha,\beta)
    =
    \inf_{\gamma\in\cM_+(\R^d\times\R^d)}
    \left\{
    \int \|x-y\|^2\dd\gamma(x,y)
    +\tau_0\KL(\gamma_0\mid\alpha)
    +\tau_1\KL(\gamma_1\mid\beta)
    \right\}.
\end{equation}
Put
\begin{equation*}
    r_0=\frac{2}{\tau_0},\qquad
    r_1=\frac{2}{\tau_1},\qquad
    A_0=\Sigma_0^{-1},\qquad
    A_1=\Sigma_1^{-1},
\end{equation*}
\begin{equation*}
    C_0=A_0+r_0\Id,
    \qquad
    C_1=A_1+r_1\Id,
    \qquad
    \kappa=r_1-r_0.
\end{equation*}
All matrix square roots below are principal square roots.  Define
\begin{equation}\label{eq:S-star}
    S_*
    =
    \frac12
    \left[
    \kappa\Id+
    \left(\kappa^2\Id+4C_1^{1/2}C_0C_1^{1/2}\right)^{1/2}
    \right]
\end{equation}
and
\begin{equation}\label{eq:L-star}
    L_*=C_1^{-1/2}S_*C_1^{-1/2}.
\end{equation}
Next set
\begin{equation*}
    P_*=[A_0+r_0(\Id-L_*)]^{-1},
    \qquad
    Q_*=L_*P_*L_*.
\end{equation*}
For the mean variables, put
\begin{equation}\label{eq:h-star}
    h_*=(\Id+r_0\Sigma_0+r_1\Sigma_1)^{-1}(m_0-m_1),
\end{equation}
\begin{equation}\label{eq:uv-star}
    u_*=m_0-r_0\Sigma_0h_*,
    \qquad
    v_*=m_1+r_1\Sigma_1h_*.
\end{equation}
Let
\begin{equation*}
    p_* = \cN(u_*,P_*),
    \qquad
    q_* = \cN(v_*,Q_*),
\end{equation*}
and define
\begin{equation}\label{eq:Astar}
    \mathcal A_*
    =W_2^2(p_*,q_*)
    +\tau_0\KL(p_*\mid\mu_0)
    +\tau_1\KL(q_*\mid\mu_1).
\end{equation}

\begin{theorem}[Closed form]\label{thm:main}
The matrices \(P_*\) and \(Q_*\) are positive definite.  The unique minimizer of \eqref{eq:uot-primal} is
\begin{equation*}
    \gamma_* = M_*\pi_*,
\end{equation*}
where
\begin{equation}\label{eq:M-star}
    M_*
    =
    a^{\tau_0/(\tau_0+\tau_1)}
    b^{\tau_1/(\tau_0+\tau_1)}
    \exp\left(-\frac{\mathcal A_*}{\tau_0+\tau_1}\right),
\end{equation}
and \(\pi_*\) is the Gaussian \(W_2\)-optimal coupling between \(p_*\) and \(q_*\).  Equivalently, if \(X\sim p_*\), then
\begin{equation}\label{eq:optimal-map}
    Y=v_*+L_*(X-u_*)
\end{equation}
has law \(q_*\), and \((X,Y)\sim\pi_*\).  The coupling \(\pi_*\) is a generally degenerate Gaussian measure on \(\R^d\times\R^d\), supported on the graph of \eqref{eq:optimal-map}.  The optimal value is
\begin{equation}\label{eq:optimal-value}
    \mathsf U_{\tau_0,\tau_1}(\alpha,\beta)
    =
    \tau_0a+\tau_1b-(\tau_0+\tau_1)M_*.
\end{equation}
\end{theorem}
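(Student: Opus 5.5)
The proof has two halves. First I construct a primal candidate and compute its value. Then I certify optimality and uniqueness with explicit quadratic dual potentials, using the standard KL-penalized duality
\[
\mathsf U_{\tau_0,\tau_1}(\alpha,\beta)\ \ge\ \tau_0\!\int(1-e^{-f/\tau_0})\dd\alpha+\tau_1\!\int(1-e^{-g/\tau_1})\dd\beta,
\qquad f(x)+g(y)\le\|x-y\|^2 .
\]

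\textbf{Reduction to a finite-dimensional problem.} Write any admissible \(\gamma\) of finite cost as \(M\pi\) with \(M=\gamma(\R^{2d})\) and \(\pi\) a probability measure with marginals \(p,q\). The scaling identity \(\KL(M p\mid a\mu_0)=M\KL(p\mid\mu_0)+M\log(M/a)-M+a\) gives
\[
\text{cost}(\gamma)=M\Big[\int\|x-y\|^2\dd\pi+\tau_0\KL(p\mid\mu_0)+\tau_1\KL(q\mid\mu_1)\Big]+\tau_0\big(M\log\tfrac Ma-M+a\big)+\tau_1\big(M\log\tfrac Mb-M+b\big).
\]
For fixed \((p,q)\), minimizing over \(\pi\) replaces the transport term by \(W_2^2(p,q)\), giving a bracket \(\mathcal A(p,q)\). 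Minimizing the resulting scalar function of \(M\) gives \((\tau_0+\tau_1)\log M=\tau_0\log a+\tau_1\log b-\mathcal A\). Substituting back yields the value \(\tau_0a+\tau_1b-(\tau_0+\tau_1)M\), which is decreasing in \(M\), hence increasing in \(\mathcal A\). So the whole problem reduces to minimizing \(\mathcal A(p,q)\) over probability measures.

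\textbf{The Gaussian candidate.} Over Gaussian \(p=\cN(u,P)\), \(q=\cN(v,Q)\), the functional \(\mathcal A\) splits into a mean part and a covariance part. The mean part \(\|u-v\|^2+\tfrac{\tau_0}{2}\|u-m_0\|_{A_0}^2+\tfrac{\tau_1}{2}\|v-m_1\|_{A_1}^2\) is a strictly convex quadratic. Its stationarity equations are \(u-v=-\tfrac{r_0^{-1}}{1}\cdots\), which rearrange to \(A_0(u-m_0)=-r_0(u-v)\) and \(A_1(v-m_1)=r_1(u-v)\). Setting \(h=u-v\) recovers \eqref{eq:h-star} and \eqref{eq:uv-star}.

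The covariance part is
\[
\tr P+\tr Q-2\tr(P^{1/2}QP^{1/2})^{1/2}+\tfrac{\tau_0}{2}\big(\tr A_0P-\log\det A_0P-d\big)+\tfrac{\tau_1}{2}\big(\cdots\big).
\]
I would not differentiate this directly. Instead I parametrize by the optimal map, taking \(Q=LPL\) with \(L\succ0\), so that \(W_2\) contributes \(\tr(\Id-L)P(\Id-L)\). Stationarity in \(P\) gives \(P^{-1}=A_0+r_0(\Id-L)\). Stationarity in \(Q\) gives \(Q^{-1}=A_1+r_1(\Id-L^{-1})\) (the Brenier potential gradient for the reversed map is \(\Id-L^{-1}\)). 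Combining these with \(Q^{-1}=L^{-1}P^{-1}L^{-1}\) yields
\[
L C_1 L - \kappa L = C_0 .
\]
With \(S=C_1^{1/2}LC_1^{1/2}\) this becomes \(S^2-\kappa S=C_1^{1/2}C_0C_1^{1/2}\), whose unique positive definite root is \(S_*\). Positivity of \(P_*\) follows from \(P_*^{-1}=C_0-r_0L_*\). I would check that \(C_0-r_0L_*\succ0\) by rewriting it via the Riccati equation as \(L_*(C_1-r_1L_*^{-1})L_*\), i.e.\ \(L_*A_1L_*+\)(something nonnegative). To make this work I would use the identity \(C_0-r_0L_*=L_*C_1L_*-r_1L_*=L_*(A_1+r_1(\Id-L_*^{-1}))L_*\) and an eigenvalue comparison. \emph{I expect this to be the main obstacle}, because the sign of \(\kappa\) is arbitrary and the two sides must be handled together.

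\textbf{Dual certificate, global optimality and uniqueness.} To certify, I take quadratic potentials whose exponentials match the densities:
\[
e^{-f/\tau_0}=\frac{M_*\dd p_*}{\dd\alpha},\qquad e^{-g/\tau_1}=\frac{M_*\dd q_*}{\dd\beta}.
\]
Both are quadratic, because ratios of Gaussian densities are exponentials of quadratics. I then verify two facts from the stationarity equations. First, \(f(x)+g(y)-\|x-y\|^2\) is a nonpositive quadratic form (its Hessian is negative semidefinite, via the Schur complement relations that encode \(P_*^{-1}\), \(Q_*^{-1}\) and \(L_*\)). Second, it vanishes exactly on the graph \eqref{eq:optimal-map}.

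Weak duality then holds. Complementary slackness follows because \(\gamma_*\) is concentrated where equality holds and \(\gamma_{*,i}\) matches the first-order KL conditions, so primal equals dual and \(\gamma_*\) is optimal. The value computed in the first step gives \eqref{eq:optimal-value}.

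For uniqueness, any other minimizer \(\gamma\) must also attain equality. Strict convexity of \(\KL\) forces \(\gamma_0=M_*p_*\) and \(\gamma_1=M_*q_*\). The equality set of the dual constraint then forces \(\gamma\) onto the graph, which determines \(\gamma=\gamma_*\).
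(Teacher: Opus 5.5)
Your architecture is the same as the paper's: mass decomposition, a Gaussian candidate from the mean and covariance stationarity equations, global certification by the quadratic potentials $f=-\tau_0\log(M_*p_*/(a\mu_0))$ and $g=-\tau_1\log(M_*q_*/(b\mu_1))$, and uniqueness from the equality cases. Dropping the Gelbrich/Gaussian-reduction lemma is legitimate, because the certificate is global.

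The genuine gap is the positive definiteness of $P_*$. It is part of the statement, and everything after it depends on it: $p_*$ must be a Gaussian, and $f$ must be defined. Your proposed route does not close it. You have $L_*(A_1+r_1(\Id-L_*^{-1}))L_*=L_*A_1L_*+r_1(L_*^2-L_*)$, but $L_*^2-L_*$ is positive semidefinite only when $L_*\succeq\Id$. Likewise, the original form $A_0+r_0(\Id-L_*)$ is visibly positive only when $L_*\preceq\Id$. In general $L_*$ has eigenvalues on both sides of $1$ and does not commute with $A_0$ or $A_1$, so no eigenvalue comparison applied to either form alone will work. The missing idea is to add the two representations of $P_*^{-1}$ with weights $1/r_0$ and $1/r_1$:
\[
\Bigl(\frac1{r_0}+\frac1{r_1}\Bigr)P_*^{-1}
=\frac1{r_0}A_0+\frac1{r_1}L_*A_1L_*+(\Id-L_*)^2\succ0 .
\]
Here the indefinite pieces $(\Id-L_*)+(L_*^2-L_*)$ complete the square. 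This is exactly the paper's identity, and then $Q_*=L_*P_*L_*\succ0$.

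There is a smaller omission in the certificate. The mean and covariance equations fix only the quadratic and linear parts, giving
\[
f(x)+g(y)-\|x-y\|^2=-\|L_*^{1/2}X-L_*^{-1/2}Y\|^2+k, \qquad X=x-u_*,\ Y=y-v_*,
\]
for some constant $k$. That constant contains $-(\tau_0+\tau_1)\log M_*$. So vanishing on the graph, which you need for both feasibility and complementary slackness, also requires the mass equation; it does not follow from the mean and covariance stationarity equations alone.

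The paper checks $c_\phi+c_\psi=\|h_*\|^2$ through trace identities derived from the covariance equations. A shorter route fits your setup: compute the cost of $\gamma_*$ in two ways.
\begin{itemize}
\item By the mass decomposition and the formula for $M_*$, it equals $\tau_0a+\tau_1b-(\tau_0+\tau_1)M_*$.
\item By the equality case of the Young inequality on the marginals, together with $\|x-y\|^2=f+g-k$ on the graph, it equals $\tau_0(a-M_*)+\tau_1(b-M_*)-kM_*$.
\end{itemize}
Comparing the two gives $kM_*=0$, hence $k=0$.
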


\section{Proof of the main theorem}

\paragraph{Mass decomposition.}
Let \(M=\gamma(\R^d\times\R^d)\).  For \(M>0\), write \(\gamma=M\pi\), where \(\pi\) is a probability measure with marginals \(p,q\).  Then \(\gamma_0=Mp\) and \(\gamma_1=Mq\).  Since \(\alpha=a\mu_0\) and \(\beta=b\mu_1\),
\begin{equation*}
    \KL(Mp\mid a\mu_0)
    =M\KL(p\mid\mu_0)+M\log\frac{M}{a}-M+a,
\end{equation*}
with the analogous identity for \(\KL(Mq\mid b\mu_1)\).  Minimizing over couplings with fixed marginals gives \(W_2^2(p,q)\), and the problem becomes
\begin{align*}
    \inf_{M\ge0,\,p,q\in\cP_2(\R^d)}
    \Bigl\{&M\mathcal A(p,q)
    +\tau_0\left(M\log\frac{M}{a}-M+a\right) \\
    &\quad +\tau_1\left(M\log\frac{M}{b}-M+b\right)\Bigr\},
\end{align*}
where
\begin{equation}\label{eq:A-pq}
    \mathcal A(p,q)
    =W_2^2(p,q)+\tau_0\KL(p\mid\mu_0)+\tau_1\KL(q\mid\mu_1).
\end{equation}
For fixed \((p,q)\), the minimizer in \(M\) satisfies
\[
    \mathcal A(p,q)+\tau_0\log\frac{M}{a}+\tau_1\log\frac{M}{b}=0,
\]
and therefore
\begin{equation}\label{eq:M-pq}
    M(p,q)
    =a^{\tau_0/(\tau_0+\tau_1)}
     b^{\tau_1/(\tau_0+\tau_1)}
     \exp\left(-\frac{\mathcal A(p,q)}{\tau_0+\tau_1}\right).
\end{equation}
There is a finite competitor, for instance \(p=\mu_0\) and \(q=\mu_1\).  The case \(M=0\) gives the larger value \(\tau_0a+\tau_1b\), while \eqref{eq:M-pq} is positive and yields a smaller value whenever \(\mathcal A(p,q)<\infty\).  It remains to minimize \(\mathcal A(p,q)\).

\paragraph{Reduction to Gaussian marginals.}
The normalized problem admits the following Gaussian reduction.

\begin{lemma}
\label{lem:gaussian-reduction}
Let \(p,q\in\cP_2(\R^d)\) have finite KL terms in \eqref{eq:A-pq}, with means and covariances \((u,P)\) and \((v,Q)\).  Let \(g_0=\cN(u,P)\) and \(g_1=\cN(v,Q)\).  Then \(P,Q\in\Spp^d\) and
\[
    \mathcal A(p,q)\ge \mathcal A(g_0,g_1).
\]
\end{lemma}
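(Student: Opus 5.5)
We will treat the three terms of \(\mathcal A(p,q)\) separately and show that each one does not increase when \(p,q\) are replaced by the Gaussians with the same first two moments.

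\textbf{The transport term.} This is the Gelbrich bound: for any \(p,q\in\cP_2(\R^d)\) with moments \((u,P),(v,Q)\),
\[
W_2^2(p,q)\ge \|u-v\|^2+\tr P+\tr Q-2\tr\bigl(P^{1/2}QP^{1/2}\bigr)^{1/2}=W_2^2(g_0,g_1).
\]
We would prove it in two steps. First, any coupling \(\pi\) has cost \(\|u-v\|^2+\tr P+\tr Q-2\tr K\), where \(K\) is the cross-covariance. Second, the block matrix \(\begin{pmatrix}P&K\\K^\top&Q\end{pmatrix}\) is positive semidefinite, and maximizing \(\tr K\) under this constraint gives the Bures value. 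The Gaussian coupling attains that value. Since the earlier results can be assumed, we would simply cite the classical Gelbrich/Dowson--Landau result here, and note that the same inequality holds for degenerate \(P,Q\).

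\textbf{The KL terms and nondegeneracy.} Finiteness of \(\KL(p\mid\mu_0)\) forces \(p\ll\mu_0\), hence \(p\) has a Lebesgue density. A measure supported on a proper affine subspace would be singular, so \(P\) is nonsingular; thus \(P\succ0\), and likewise \(Q\succ0\). For probability measures the convention in \eqref{eq:KL} reduces to \(\int \log\frac{dp}{d\mu_0}\,dp\). We therefore write
\[
\KL(p\mid\mu_0)=-H(p)-\int\log\varphi_0\,dp,
\]
where \(H\) is the differential entropy and \(\varphi_0\) is the Gaussian density of \(\mu_0\).

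The cross term \(-\int\log\varphi_0\,dp=\tfrac12\bigl[d\log(2\pi)+\log\det\Sigma_0+\tr(A_0P)+(u-m_0)^\top A_0(u-m_0)\bigr]\) depends only on \((u,P)\), so it is the same for \(p\) and \(g_0\). The maximum-entropy property gives \(H(p)\le H(g_0)\), with \(H(p)\) finite here. Together these yield \(\KL(p\mid\mu_0)\ge\KL(g_0\mid\mu_0)\), and the same argument applies to \(q\).

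\textbf{Main obstacle: justifying the entropy split.} The KL is finite, but we must make sure the decomposition is not of the form \(\infty-\infty\). Since \(p\) has finite second moment, \(\log\varphi_0\) is \(p\)-integrable. Hence \(\int\log\frac{dp}{dx}\,dp\) is well defined and finite, and the split is legitimate. For the maximum-entropy inequality we would avoid any separate argument and use the exact identity
\[
\KL(p\mid g_0)=-H(p)-\int\log g_0\,dp=H(g_0)-H(p)\ge0,
\]
where \(\int\log g_0\,dp=\int\log g_0\,dg_0\) because \(\log g_0\) is quadratic and \(p,g_0\) share their first two moments.

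Summing the three inequalities gives \(\mathcal A(p,q)\ge\mathcal A(g_0,g_1)\).
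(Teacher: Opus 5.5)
Your proposal is correct and follows the paper's proof: nondegeneracy of $P,Q$ from $p\ll\mu_0$ and $q\ll\mu_1$, Gelbrich's inequality for the transport term, and the KL comparison via $\KL(p\mid g_0)\ge0$ together with the fact that quadratic log-densities integrate identically against $p$ and $g_0$. Routing the KL step through differential entropy instead of the paper's chain rule $\KL(p\mid\mu_0)=\KL(p\mid g_0)+\KL(g_0\mid\mu_0)$ is only a change of bookkeeping, since the two identities are equivalent, and your explicit check that no $\infty-\infty$ arises adds care the paper leaves implicit.
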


\begin{proof}
Since \(p,q\in\cP_2(\R^d)\), their covariance matrices are well defined.  Finite KL with respect to a non-degenerate Gaussian implies \(p\ll\mu_0\) and \(q\ll\mu_1\).  If, for example, \(P\) were singular, then \(p\) would be supported on a proper affine hyperplane, which has \(\mu_0\)-measure zero; this contradicts \(p\ll\mu_0\).  Hence \(P,Q\in\Spp^d\).  Gelbrich's inequality \cite{gelbrich_1990} gives
\begin{equation*}
    W_2^2(p,q)
    \ge
    \|u-v\|^2+\tr\left(P+Q-2(P^{1/2}QP^{1/2})^{1/2}\right)
    =W_2^2(g_0,g_1).
\end{equation*}
The KL terms decrease as well.  We record the argument because it is the finite-dimensional Gaussian projection used in the proof.  Since \(g_0\) and \(\mu_0\) are equivalent Gaussian measures and \(p\ll\mu_0\), the chain rule for relative entropy gives
\begin{equation*}
    \KL(p\mid\mu_0)
    =\KL(p\mid g_0)+\int\log\frac{g_0}{\mu_0}\dd p .
\end{equation*}
The logarithm \(\log(g_0/\mu_0)\) is a quadratic polynomial.  Since \(p\) and \(g_0\) have the same mean and covariance,
\[
    \int\log\frac{g_0}{\mu_0}\dd p
    =
    \int\log\frac{g_0}{\mu_0}\dd g_0
    =
    \KL(g_0\mid\mu_0).
\]
Thus
\[
    \KL(p\mid\mu_0)
    =
    \KL(p\mid g_0)+\KL(g_0\mid\mu_0)
    \ge \KL(g_0\mid\mu_0),
\]
with the usual extended-value interpretation.  The same argument applies to \(q\).
\end{proof}

The remaining minimization is over the Gaussian variables \((u,v,P,Q)\).

For Gaussian measures one has
\begin{align*}
    &\|u-v\|^2+d_B^2(P,Q)
    +\tau_0\KL(\cN(u,P)\mid\mu_0)
    +\tau_1\KL(\cN(v,Q)\mid\mu_1),
\end{align*}
where
\begin{equation*}
    d_B^2(P,Q)=\tr\left(P+Q-2(P^{1/2}QP^{1/2})^{1/2}\right),
\end{equation*}
and
\begin{equation*}
    \KL(\cN(u,P)\mid\cN(m_0,\Sigma_0))
    =\frac12\left[
    \tr(A_0P)+(u-m_0)^TA_0(u-m_0)-d
    +\log\frac{\det\Sigma_0}{\det P}
    \right].
\end{equation*}
The mean and covariance variables separate.

\paragraph{Mean equations.}
The mean component is a strictly convex quadratic problem.  With \(h=u-v\), its first-order equations are
\[
    2(u-v)+\tau_0A_0(u-m_0)=0,
    \qquad
    2(v-u)+\tau_1A_1(v-m_1)=0.
\]
Equivalently,
\[
    u=m_0-r_0\Sigma_0h,
    \qquad
    v=m_1+r_1\Sigma_1h.
\]
Subtracting gives \((\Id+r_0\Sigma_0+r_1\Sigma_1)h=m_0-m_1\), which proves \eqref{eq:h-star} and \eqref{eq:uv-star}.

\paragraph{Covariance equations.}
For \(P,Q\in\Spp^d\), let \(L\in\Spp^d\) be the Gaussian optimal map from \(\cN(0,P)\) to \(\cN(0,Q)\).  Then
\begin{equation*}
    Q=LPL,
\end{equation*}
and \(L^{-1}\) is the Gaussian optimal map in the reverse direction.  The Bures differential formula, see \cite{bhatia_jain_lim_2019,takatsu_2011}, reads
\begin{equation*}
    \nabla_P d_B^2(P,Q)=\Id-L,
    \qquad
    \nabla_Q d_B^2(P,Q)=\Id-L^{-1}.
\end{equation*}
The stationarity equations construct the Gaussian candidate; global optimality is certified independently by the dual argument below.  The covariance first-order equations are
\begin{equation}\label{eq:PQ-foc}
    P^{-1}=A_0+r_0(\Id-L),
    \qquad
    Q^{-1}=A_1+r_1(\Id-L^{-1}).
\end{equation}
Using \(Q^{-1}=L^{-1}P^{-1}L^{-1}\) and multiplying by \(L\) on both sides gives
\begin{equation}\label{eq:riccati}
    LC_1L-\kappa L=C_0.
\end{equation}

To solve \eqref{eq:riccati}, set \(S=C_1^{1/2}LC_1^{1/2}\).  Then
\begin{equation*}
    S^2-\kappa S=B,
    \qquad
    B=C_1^{1/2}C_0C_1^{1/2}\succ0.
\end{equation*}
If \(S\succ0\) solves this equation, then \(B\) is a polynomial in \(S\), so \(S\) and \(B\) commute.  Diagonalizing them simultaneously reduces the equation to \(s^2-\kappa s=\lambda\), \(\lambda>0\), whose unique positive root is
\[
    s=\frac{\kappa+\sqrt{\kappa^2+4\lambda}}{2}.
\]
Functional calculus gives \eqref{eq:S-star}, and then \eqref{eq:L-star}; this is the unique positive definite solution of \eqref{eq:riccati}.

The positivity of \(P_*\) and \(Q_*\) follows from the Riccati equation by the identity
\[
    (\Id-L_*)^2+\frac1{r_0}A_0+\frac1{r_1}L_*A_1L_*
    =
    \frac{r_0+r_1}{r_0r_1}[A_0+r_0(\Id-L_*)].
\]
The left-hand side is positive definite, hence \(P_*^{-1}=A_0+r_0(\Id-L_*)\succ0\), and \(Q_*=L_*P_*L_*\succ0\).  Since \(Q_*^{-1}=L_*^{-1}P_*^{-1}L_*^{-1}\), \eqref{eq:riccati} also gives
\begin{equation}\label{eq:Q-backsub}
    Q_*^{-1}=A_1+r_1(\Id-L_*^{-1}),
\end{equation}
so the pair \((P_*,Q_*)\) satisfies both covariance first-order equations.

\paragraph{Dual certificate and uniqueness.}
A complementary-slackness certificate now completes the argument.

\begin{proposition}
\label{prop:dual-certificate}
The measure \(\gamma_*=M_*\pi_*\) satisfies the KL-unbalanced complementary-slackness conditions.  This proves uniqueness of the minimizer in \eqref{eq:uot-primal}.
\end{proposition}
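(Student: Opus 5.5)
Throughout, write \(\|x\|^2=x^Tx\).

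The approach is to exhibit explicit quadratic potentials \(f,g\) that are feasible for the KL-unbalanced dual and whose dual value equals the primal value of \(\gamma_*\); strict convexity of the KL terms then gives uniqueness. The dual of \eqref{eq:uot-primal} reads
\[
\sup_{f(x)+g(y)\le\|x-y\|^2}\ \tau_0\int\bigl(1-e^{-f/\tau_0}\bigr)\dd\alpha+\tau_1\int\bigl(1-e^{-g/\tau_1}\bigr)\dd\beta .
\]
Weak duality for every admissible \(\gamma\) follows from \(\int(f\oplus g)\dd\gamma\le\int\|x-y\|^2\dd\gamma\) together with the Fenchel–Young inequality \(\tau\KL(\rho\mid\eta)\ge\int(-f)\dd\rho+\tau\int(1-e^{-f/\tau})\dd\eta\). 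Equality holds in the latter iff \(\dd\rho/\dd\eta=e^{-f/\tau}\).

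\textbf{Step 1 (potentials).} The complementary-slackness conditions force
\[
f=-\tau_0\log\frac{\dd(M_*p_*)}{\dd\alpha},\qquad g=-\tau_1\log\frac{\dd(M_*q_*)}{\dd\beta}.
\]
Since these are ratios of Gaussians, \(f\) and \(g\) are explicit quadratics, e.g.
\[
f(x)=\tfrac{\tau_0}{2}\Bigl[(x-u_*)^TP_*^{-1}(x-u_*)-(x-m_0)^TA_0(x-m_0)\Bigr]+c_0,
\]
and similarly for \(g\), with constants \(c_0,c_1\) involving \(\log(M_*/a)\), \(\log(M_*/b)\) and the log-determinants. By construction the Fenchel–Young equalities hold for \(\gamma_*\).

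\textbf{Step 2 (feasibility).} I will show \(\Phi(x,y)=\|x-y\|^2-f(x)-g(y)\ge0\), with equality exactly on the graph \(y=v_*+L_*(x-u_*)\).

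The quadratic part of \(\Phi\) in \((x,y)\) has Hessian (divided by 2)
\[
\begin{pmatrix}\Id-\tfrac{\tau_0}{2}(P_*^{-1}-A_0) & -\Id\\ -\Id & \Id-\tfrac{\tau_1}{2}(Q_*^{-1}-A_1)\end{pmatrix}.
\]
By \eqref{eq:PQ-foc} and \eqref{eq:Q-backsub} we have \(\tfrac{\tau_0}{2}(P_*^{-1}-A_0)=\Id-L_*\) and \(\tfrac{\tau_1}{2}(Q_*^{-1}-A_1)=\Id-L_*^{-1}\). The Hessian is therefore \(\begin{pmatrix}L_*&-\Id\\-\Id&L_*^{-1}\end{pmatrix}\), which is positive semidefinite with kernel \(\{(\xi,L_*\xi)\}\). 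In fact
\[
\xi^TL_*\xi-2\xi^T\eta+\eta^TL_*^{-1}\eta=\|L_*^{1/2}\xi-L_*^{-1/2}\eta\|^2 .
\]

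The linear terms must vanish at \((u_*,v_*)\). The \(x\)-gradient there is \(2(u_*-v_*)+\tau_0A_0(u_*-m_0)\), which is zero by the mean equations, and likewise for \(y\). Hence
\[
\Phi(x,y)=\|L_*^{1/2}(x-u_*)-L_*^{-1/2}(y-v_*)\|^2+\Phi(u_*,v_*).
\]

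\textbf{Step 3 (the constant).} It remains to show \(\Phi(u_*,v_*)=0\). This is the step I expect to be the main obstacle, since it ties together \(M_*\), the value \(\mathcal A_*\), and the constants \(c_0,c_1\).

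The cleanest route avoids computing the constant directly. Integrate \(\Phi\) against \(\gamma_*\). The square term vanishes on the graph, so
\[
M_*\Phi(u_*,v_*)=\int\|x-y\|^2\dd\gamma_*-\int f\dd\gamma_{*0}-\int g\dd\gamma_{*1}.
\]
Using \(f=-\tau_0\log(\dd\gamma_{*0}/\dd\alpha)\), the right side becomes
\[
M_*W_2^2(p_*,q_*)+\tau_0M_*\bigl[\KL(p_*\mid\mu_0)+\log(M_*/a)\bigr]+\tau_1M_*\bigl[\KL(q_*\mid\mu_1)+\log(M_*/b)\bigr].
\]
This equals \(M_*\bigl[\mathcal A_*+\tau_0\log(M_*/a)+\tau_1\log(M_*/b)\bigr]\), which is zero by the defining relation of \eqref{eq:M-star}. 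Thus \(\Phi\ge0\) everywhere, with equality exactly on the graph.

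\textbf{Step 4 (conclusion).} Feasibility together with equality in every inequality gives primal value equal to dual value. Hence \(\gamma_*\) is optimal, and the value is obtained by evaluating the dual objective, yielding \eqref{eq:optimal-value}.

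For uniqueness, let \(\gamma\) be any other minimizer. Equality in weak duality forces
\[
\dd\gamma_0/\dd\alpha=e^{-f/\tau_0},\qquad \dd\gamma_1/\dd\beta=e^{-g/\tau_1},
\]
so \(\gamma\) has marginals \(M_*p_*\) and \(M_*q_*\). Equality also forces \(\gamma\) to be concentrated on \(\{\Phi=0\}\), which is the graph of \eqref{eq:optimal-map}. A measure with first marginal \(M_*p_*\) supported on a graph is uniquely determined, so \(\gamma=\gamma_*\).
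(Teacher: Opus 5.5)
Your proof is correct. Steps 1, 2 and 4 follow the paper. You use the same potentials and the same weak-duality inequality, and you reach the same completed square \(\|L_*^{1/2}X-L_*^{-1/2}Y\|^2\). You get it from the Hessian and the vanishing gradient at \((u_*,v_*)\) instead of expanding \(\phi,\psi\) in centred coordinates, but that is the same computation. Your uniqueness argument from the equality cases is also the paper's.

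The genuine difference is Step 3. The paper verifies \(c_\phi+c_\psi=\|h_*\|^2\) algebraically. It substitutes the Gaussian KL formula, cancels the log-determinant and mean terms, and removes the remaining covariance terms with two trace identities derived from the covariance first-order equations, together with \(d_B^2(P_*,Q_*)=\tr P_*+\tr Q_*-2\tr(L_*P_*)\).

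You instead note that \(\Phi\) equals the constant \(\Phi(u_*,v_*)\) \(\gamma_*\)-almost everywhere, so \(M_*\Phi(u_*,v_*)=\int\Phi\dd\gamma_*\). By the definitions of \(f,g\) and \(\mathcal A_*\), this integral unwinds to \(M_*[\mathcal A_*+\tau_0\log(M_*/a)+\tau_1\log(M_*/b)]\), which is zero. This is shorter and needs no trace or log-determinant bookkeeping. It also makes clear that the constant condition is exactly the scalar optimality condition for the mass, while the covariance equations enter only once, through the Hessian. It relies on \(\int\|x-y\|^2\dd\pi_*=W_2^2(p_*,q_*)\), i.e. on the graph of the positive definite map \(L_*\) being the optimal coupling between \(p_*\) and \(q_*\); the paper uses that identification as well.

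In exchange, the paper's computation gives the constants \(c_\phi,c_\psi\) explicitly and checks, at the algebraic level, that the mass, mean and covariance equations are mutually consistent.
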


\begin{proof}
The elementary inequality
\begin{equation}\label{eq:KL-young}
    \tau(z\log z-z+1)+\varphi z
    \ge
    \tau(1-e^{-\varphi/\tau}),
    \qquad z\ge0,
\end{equation}
has equality if and only if \(z=e^{-\varphi/\tau}\).  If measurable \(\phi,\psi\) satisfy
\begin{equation}\label{eq:dual-feasible}
    \phi(x)+\psi(y)\le \|x-y\|^2\qquad\text{for all }x,y,
\end{equation}
then every admissible \(\gamma\) satisfies
\begin{align}\label{eq:weak-dual-bound}
    &\int \|x-y\|^2\dd\gamma
    +\tau_0\KL(\gamma_0\mid\alpha)
    +\tau_1\KL(\gamma_1\mid\beta) \notag\\
    &\qquad\ge
    \tau_0\int\left(1-e^{-\phi/\tau_0}\right)\dd\alpha
    +\tau_1\int\left(1-e^{-\psi/\tau_1}\right)\dd\beta .
\end{align}
If one of the KL terms is \(+\infty\), this inequality is immediate.  Otherwise it follows by applying \eqref{eq:KL-young} to the Radon--Nikodym densities of \(\gamma_0\) and \(\gamma_1\) with respect to \(\alpha\) and \(\beta\), respectively, and using \eqref{eq:dual-feasible}.
Equality in this bound holds if
\begin{equation}\label{eq:kkt-marginals}
    \gamma_0=e^{-\phi/\tau_0}\alpha,
    \qquad
    \gamma_1=e^{-\psi/\tau_1}\beta,
\end{equation}
and  \(\phi(x)+\psi(y)=\|x-y\|^2\) \(\gamma\)-almost everywhere.  The quadratic potentials below have finite Gaussian integrals, so no integrability issue arises.

Define
\begin{equation*}
    \phi(x)=-\tau_0\log\frac{M_*p_*(x)}{a\mu_0(x)},
    \qquad
    \psi(y)=-\tau_1\log\frac{M_*q_*(y)}{b\mu_1(y)},
\end{equation*}
where densities are taken with respect to Lebesgue measure.  Then \eqref{eq:kkt-marginals} holds for \(\gamma_*=M_*\pi_*\).  Let
\[
    X=x-u_*,\qquad Y=y-v_*,\qquad h_*=u_*-v_*.
\]
Using \eqref{eq:PQ-foc}, \eqref{eq:Q-backsub}, and the mean equations, the potentials expand as
\begin{equation*}
    \phi(x)=X^T(\Id-L_*)X+2h_*^TX+c_\phi,
\end{equation*}
\begin{equation*}
    \psi(y)=Y^T(\Id-L_*^{-1})Y-2h_*^TY+c_\psi.
\end{equation*}
The constants satisfy
\begin{equation}\label{eq:constant-sum}
    c_\phi+c_\psi=\|h_*\|^2.
\end{equation}
For completeness we record the short verification of the constant term.  The mass formula \eqref{eq:M-star} is equivalent to
\begin{equation*}
    \tau_0\log\frac{M_*}{a}+\tau_1\log\frac{M_*}{b}+\mathcal A_*=0.
\end{equation*}
After inserting \eqref{eq:Astar} and the Gaussian KL formula into \(c_\phi+c_\psi\), the log-determinant and mean-quadratic terms cancel.  The remaining covariance contribution is zero because the covariance first-order equations imply
\[
    \frac{\tau_0}{2}\{\tr(A_0P_*)-d\}=-\tr(P_*)+\tr(L_*P_*),
    \qquad
    \frac{\tau_1}{2}\{\tr(A_1Q_*)-d\}=-\tr(Q_*)+\tr(L_*P_*),
\]
while \(d_B^2(P_*,Q_*)=\tr(P_*)+\tr(Q_*)-2\tr(L_*P_*)\).  This proves \eqref{eq:constant-sum}.

It follows that
\begin{align*}
    \|x-y\|^2-\phi(x)-\psi(y)
    &=\|h_*+X-Y\|^2-X^T(\Id-L_*)X-Y^T(\Id-L_*^{-1})Y \\
    &\quad -2h_*^T(X-Y)-\|h_*\|^2 \\
    &=X^TL_*X-2X^TY+Y^TL_*^{-1}Y \\
    &=\|L_*^{1/2}X-L_*^{-1/2}Y\|^2\ge0.
\end{align*}
This proves \(\phi+\psi\le\|x-y\|^2\), with equality exactly on the graph \(Y=L_*X\), equivalently on \eqref{eq:optimal-map}.  The marginal identities and graph equality give equality in \eqref{eq:weak-dual-bound} for \(\gamma_*=M_*\pi_*\), proving global optimality.

Let \(\bar\gamma\) be another optimizer.  Equality in \eqref{eq:KL-young} forces its marginals to be \(M_*p_*\) and \(M_*q_*\), while equality in \eqref{eq:dual-feasible} forces \(\bar\gamma\) to be supported on the same graph.  The graph map \(T(x)=v_*+L_*(x-u_*)\) is deterministic, so the only measure with first marginal \(M_*p_*\) supported on this graph is \(M_*(\mathrm{Id},T)_\#p_*\).  This gives \(\bar\gamma=\gamma_*\).
\end{proof}

The mass reduction, Gaussian reduction, mean and covariance equations, and \cref{prop:dual-certificate} together prove \cref{thm:main}.

\section{Consequences and numerical verification}

\begin{remark}
When \(\tau_0=\tau_1=\gamma\), \eqref{eq:uot-primal} is the quadratic-cost KL entropy-transport problem associated with the Gaussian Hellinger--Kantorovich distance \(GHK_\gamma\) \cite{liero_mielke_savare_2018}.  Janati's thesis manuscript records the corresponding Gaussian value as the \(\sigma\downarrow0\) limit of entropic Gaussian UOT \cite[Prop.~14]{janati_thesis_2021}.  The expression above agrees with that symmetric endpoint, while keeping the two marginal relaxation parameters independent.  It also supplies objects that are not explicit from the symmetric value alone: the two modified Gaussian marginals \(M_*p_*,M_*q_*\), the Wasserstein graph minimizer between them, the Riccati description of the covariance map, and the quadratic KL-dual potentials.

The case \(\tau_0\ne\tau_1\) no longer defines a symmetric distance, but it remains a natural convex entropy-transport problem.  Such an asymmetry is useful when creation and destruction relative to the two reference marginals should carry different costs.  It is also distinct from the semi-unbalanced Gaussian setting, where one marginal is fixed while the other is relaxed \cite{nguyen_2024_suot_gaussians}.
\end{remark}

\begin{remark}
For \(\varepsilon>0\), the entropic term \(\varepsilon\KL(\pi\mid\alpha\otimes\beta)\) enforces \(\pi\ll\alpha\otimes\beta\), and the Gaussian optimizer in \cite{janati_2020} is non-degenerate on \(\R^d\times\R^d\).  At the endpoint considered here, the minimizer is \(M_*(\mathrm{Id},T)_\#p_*\) with \(T(x)=v_*+L_*(x-u_*)\), hence is generally singular on the product space.

The distinction is structural.  In the strictly entropic problem the optimality system is a pair of Gaussian Sinkhorn fixed-point equations for scaling functions, which describe a density of the joint plan.  As \(\varepsilon\downarrow0\), that density may concentrate and the joint covariance block degenerates to the rank-\(d\) covariance of a graph-supported Gaussian coupling.  A formal endpoint substitution therefore does not, by itself, prove feasibility and complementary slackness for the unregularized problem.  Here both the singular minimizer and the quadratic dual certificate are constructed at \(\varepsilon=0\).
\end{remark}

\begin{corollary}
\label{cor:large-relaxation}
Let \(\tau_i=\lambda\bar\tau_i\), where \(\bar\tau_i>0\) are fixed, and let \(\lambda\to\infty\).  Set
\[
    \theta_i=\frac{\bar\tau_i}{\bar\tau_0+\bar\tau_1},
    \qquad
    G=a^{\theta_0}b^{\theta_1}.
\]
Then \(p_*\to\mu_0\), \(q_*\to\mu_1\), \(M_*\to G\), and
\begin{align}\label{eq:large-relaxation-expansion}
    \mathsf U_{\lambda\bar\tau_0,\lambda\bar\tau_1}(a\mu_0,b\mu_1)
    &=\lambda\left[\bar\tau_0a+\bar\tau_1b-(\bar\tau_0+\bar\tau_1)G\right] \notag\\
    &\quad +G W_2^2(\mu_0,\mu_1)+o(1).
\end{align}
The coefficient of \(\lambda\) is non-negative and vanishes if and only if \(a=b\).  Consequently, if \(a=b=m\), for arbitrary fixed \(\bar\tau_0,\bar\tau_1>0\),
\[
    \mathsf U_{\lambda\bar\tau_0,\lambda\bar\tau_1}(m\mu_0,m\mu_1)
    \to
    mW_2^2(\mu_0,\mu_1).
\]
\end{corollary}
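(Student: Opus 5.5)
The plan is to push \(\lambda\to\infty\) through the closed form of \cref{thm:main}. Write \(\tau_i=\lambda\bar\tau_i\), so \(r_i=2/(\lambda\bar\tau_i)\to0\) and \(\kappa\to0\), and track every quantity to first order in \(1/\lambda\).

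First, the explicit formulas give \(h_*\to m_0-m_1\), \(u_*=m_0-r_0\Sigma_0h_*\to m_0\), and \(v_*\to m_1\), with corrections of size \(O(1/\lambda)\). Next, \(C_i\to A_i\), and continuity of the principal square root on \(\Spp^d\) gives
\[
S_*\to (A_1^{1/2}A_0A_1^{1/2})^{1/2}, \qquad L_*\to L_0:=A_1^{-1/2}(A_1^{1/2}A_0A_1^{1/2})^{1/2}A_1^{-1/2}.
\]
A short check shows \(L_0\Sigma_0L_0=\Sigma_1\), so \(L_0\) is the Gaussian optimal map from \(\mu_0\) to \(\mu_1\). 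Consequently \(P_*=[A_0+r_0(\Id-L_*)]^{-1}\to\Sigma_0\) and \(Q_*=L_*P_*L_*\to\Sigma_1\). All of these corrections are \(O(1/\lambda)\), because the maps involved are smooth (indeed analytic) in \((r_0,r_1)\) near \(0\). This gives \(p_*\to\mu_0\) and \(q_*\to\mu_1\).

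The main step is to show that \(\mathcal A_*\to W_2^2(\mu_0,\mu_1)\). The \(W_2^2\) part converges by continuity of the Gaussian \(W_2\) formula. The KL terms are multiplied by \(\tau_i=\lambda\bar\tau_i\), so I need \(\KL(p_*\mid\mu_0)=o(1/\lambda)\). Gaussian KL is smooth and vanishes to second order at \(p=\mu_0\), and \(p_*-\mu_0\) has parameters of size \(O(1/\lambda)\). Hence \(\KL(p_*\mid\mu_0)=O(1/\lambda^2)\) and \(\tau_0\KL(p_*\mid\mu_0)=O(1/\lambda)\to0\), and likewise for \(q_*\). Then \eqref{eq:M-star} gives \(M_*=G\exp(-\mathcal A_*/(\lambda(\bar\tau_0+\bar\tau_1)))\), so \(M_*\to G\). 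I expect this step to be the main obstacle, because it needs the uniform \(O(1/\lambda)\) parameter bounds. As an alternative check I would use an upper/lower sandwich: \(\mathcal A_*\le\mathcal A(\mu_0,\mu_1)=W_2^2(\mu_0,\mu_1)\) by minimality, and \(\mathcal A_*\ge W_2^2(p_*,q_*)\to W_2^2(\mu_0,\mu_1)\). This sandwich settles the limit of \(\mathcal A_*\) without any rate.

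For the expansion, apply \eqref{eq:optimal-value} with \(M_*=G e^{-\delta}\), where \(\delta=\mathcal A_*/(\lambda(\bar\tau_0+\bar\tau_1))\). Then
\[
(\tau_0+\tau_1)M_*=\lambda(\bar\tau_0+\bar\tau_1)G\bigl(1-\delta+O(\delta^2)\bigr)=\lambda(\bar\tau_0+\bar\tau_1)G-G\mathcal A_*+O(1/\lambda).
\]
Combined with \(\mathcal A_*\to W_2^2(\mu_0,\mu_1)\), this yields \eqref{eq:large-relaxation-expansion}. Non-negativity of the \(\lambda\)-coefficient is weighted AM–GM: \(\theta_0a+\theta_1b\ge a^{\theta_0}b^{\theta_1}\), with equality if and only if \(a=b\) by strict concavity of \(\log\). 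When \(a=b=m\) we have \(G=m\), the \(\lambda\)-term vanishes, and the final limit \(mW_2^2(\mu_0,\mu_1)\) follows.
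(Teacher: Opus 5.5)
Your proposal is correct and follows essentially the same route as the paper. Both arguments get parameter convergence of \(u_*,v_*,L_*,P_*,Q_*\) by continuity of the closed form, deduce \(\mathcal A_*\to W_2^2(\mu_0,\mu_1)\), expand \(M_*=G\exp(-\mathcal A_*/(\lambda(\bar\tau_0+\bar\tau_1)))\) to first order inside \eqref{eq:optimal-value}, and use weighted AM--GM for the sign of the \(\lambda\)-coefficient. Your sandwich \(W_2^2(p_*,q_*)\le\mathcal A_*\le\mathcal A(\mu_0,\mu_1)=W_2^2(\mu_0,\mu_1)\), or alternatively your \(O(\lambda^{-2})\) bound on the KL terms, also justifies a step the paper covers only by ``continuity'': the KL contributions carry the growing factor \(\tau_i=\lambda\bar\tau_i\) and still vanish in the limit.
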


\begin{proof}
Since \(r_i=2/(\lambda\bar\tau_i)\to0\), the mean formula gives \(u_*\to m_0\) and \(v_*\to m_1\).  The explicit formula for \(L_*\), equivalently the Riccati equation, converges to the unique positive definite solution of \(LA_1L=A_0\), which is the Gaussian \(W_2\) covariance map from \(\Sigma_0\) to \(\Sigma_1\).  By continuity of the principal matrix square root on \(\mathbb S_{++}^d\), this yields \(P_*\to\Sigma_0\), \(Q_*\to\Sigma_1\), and \(\mathcal A_*\to W_2^2(\mu_0,\mu_1)\).

Writing \(\bar T=\bar\tau_0+\bar\tau_1\), the mass formula gives
\[
    M_*
    =
    G\exp\left(-\frac{\mathcal A_*}{\lambda\bar T}\right)
    =
    G\left(1-\frac{\mathcal A_*}{\lambda\bar T}+o(\lambda^{-1})\right),
\]
since \(\mathcal A_*\) has a finite limit.  Substitution into \eqref{eq:optimal-value} yields
\[
    \mathsf U_{\lambda\bar\tau_0,\lambda\bar\tau_1}(a\mu_0,b\mu_1)
    =
    \lambda\left[\bar\tau_0a+\bar\tau_1b-\bar T G\right]
    +G\mathcal A_*+o(1),
\]
and hence \eqref{eq:large-relaxation-expansion}.  Finally,
\[
    \bar\tau_0a+\bar\tau_1b-\bar T G
    =
    \bar T\left(\theta_0a+\theta_1b-a^{\theta_0}b^{\theta_1}\right)\ge0
\]
by the weighted arithmetic-geometric mean inequality, with equality precisely when \(a=b\).
\end{proof}

\begin{remark}
For \(d=1\), write \(\Sigma_i=\sigma_i^2\).  Then
\[
    C_0=\sigma_0^{-2}+r_0,
    \qquad
    C_1=\sigma_1^{-2}+r_1,
\]
and
\begin{equation*}
    L_*=\frac{\kappa+\sqrt{\kappa^2+4C_0C_1}}{2C_1}.
\end{equation*}
The remaining scalar quantities are
\[
    P_*=[\sigma_0^{-2}+r_0(1-L_*)]^{-1},
    \qquad
    Q_*=L_*^2P_*,
\]
and
\[
    h_*=\frac{m_0-m_1}{1+r_0\sigma_0^2+r_1\sigma_1^2},
    \qquad
    u_*=m_0-r_0\sigma_0^2h_*,
    \qquad
    v_*=m_1+r_1\sigma_1^2h_*.
\]
The scalar specialization is useful for numerical evaluation.
\end{remark}

  \Cref{tab:numerics} reports the Riccati residual, the positivity margin of \(P_*^{-1}\), and sampled values of the dual slack.  The residual is
\[
    \|L_*(A_1+r_1\Id)L_*-(r_1-r_0)L_*-(A_0+r_0\Id)\|_F,
\]
where \(\|\cdot\|_F\) is the Frobenius norm.  The dual slack is \(\|x-y\|^2-\phi(x)-\psi(y)\), with equality tested on the supporting graph \(y=v_*+L_*(x-u_*)\).

\begin{table}[H]
	\centering
	\small
	\caption{Algebraic and dual-certificate checks.  The first example is one-dimensional with \(m_0=0.2\), \(\sigma_0=1.1\), \(m_1=1.3\), \(\sigma_1=0.7\), \(\tau_0=1.4\), \(\tau_1=2.2\), \(a=1\), \(b=0.8\).  The second uses a two-dimensional non-commuting covariance pair.}
	\label{tab:numerics}
	\begin{tabular}{lcc}
		\toprule
		Quantity & One-dimensional example & Two-dimensional example \\
		\midrule
		Closed-form value & 0.395206446101 & 0.603839692812 \\
		Optimal mass & 0.767998209416 & 0.843293094695 \\
		Riccati residual Frobenius norm & \(8.88\times10^{-16}\) & \(3.95\times10^{-15}\) \\
		Minimum sampled dual slack & \(7.71\times10^{-8}\) & \(1.04\times10^{-2}\) \\
		Maximum graph equality error & \(2.09\times10^{-14}\) & \(4.44\times10^{-14}\) \\
		Minimum eigenvalue of \(P_*^{-1}\) & 1.125456389751 & 0.837426738528 \\
		\bottomrule
	\end{tabular}
\end{table}

For an independent finite-dimensional comparison, \cref{tab:grid-benchmark} discretizes the one-dimensional example on a uniform grid \(x_1,\ldots,x_n\) spanning six standard deviations around both input Gaussians.  Let \(\alpha_i,\beta_j\ge0\) be the corresponding quadrature weights for the input measures \(\alpha,\beta\).  The discrete KL-UOT dual is
\begin{align}\label{eq:discrete-dual-benchmark}
    \max_{\phi\in\R^n,\,\psi\in\R^n}
    \biggl\{&
    \tau_0\sum_i\alpha_i\left(1-e^{-\phi_i/\tau_0}\right)
    +\tau_1\sum_j\beta_j\left(1-e^{-\psi_j/\tau_1}\right):
    \phi_i+\psi_j\le (x_i-x_j)^2
    \biggr\}.
\end{align}
Here \(\phi_i\) and \(\psi_j\) denote the components of the vectors \(\phi,\psi\in\R^n\).
The values were obtained by a sparse trust-region interior-point solve of \eqref{eq:discrete-dual-benchmark}, using exact first and second derivatives and stopping tolerances \(10^{-8}\).  Since the computation involves truncation and quadrature, \cref{tab:grid-benchmark} is only a reproducibility check; it is not used as a convergence result.

\begin{table}[H]
\centering
\small
\caption{Finite-grid dual benchmark for the one-dimensional example in \cref{tab:numerics}.  The continuous closed-form value is \(0.395206446101\).}
\label{tab:grid-benchmark}
\begin{tabular}{ccccc}
\toprule
Grid size \(n\) & Discrete dual value & Absolute gap & Maximum constraint violation & Iterations \\
\midrule
21 & 0.450038701591 & \(5.48\times10^{-2}\) & \(0\) & 46 \\
31 & 0.417954908724 & \(2.27\times10^{-2}\) & \(0\) & 53 \\
41 & 0.408804277024 & \(1.36\times10^{-2}\) & \(0\) & 76 \\
51 & 0.404012774659 & \(8.81\times10^{-3}\) & \(0\) & 61 \\
\bottomrule
\end{tabular}
\end{table}

\section{Conclusion}

These formulas identify the Gaussian structure of the zero-coupling-entropy problem with independent KL relaxation parameters.  After the total mass is separated, the two penalties select modified Gaussian marginals whose covariance relation is governed by a Riccati equation.  The optimal joint measure is then the Wasserstein graph coupling between them, so singularity is an intrinsic feature of the endpoint problem.

The explicit solution is useful beyond the Gaussian calculation itself.  It gives a continuous test problem in which mass change, marginal relaxation, and classical Gaussian transport are visible in the same set of matrix operations.  For equal total masses, it also recovers the balanced Wasserstein formula when the relaxation parameters become large.  Natural extensions include degenerate Gaussian inputs and related Gaussian barycenter or proximal problems under KL relaxation.  Another open question is to quantify how strictly entropic Gaussian UOT solutions concentrate onto the graph identified above as the coupling entropy tends to zero.





\end{document}